\documentclass[12pt]{amsart}

\usepackage{amssymb,epstopdf,subcaption,mathtools,tikz-cd,pb-diagram,thmtools, thm-restate,pinlabel,pifont,enumitem,stmaryrd}
\usepackage{graphicx}

\usepackage[all]{xy}
\usepackage[mathscr]{euscript}

	\addtolength{\oddsidemargin}{-.35in}
	\addtolength{\evensidemargin}{-.35in}
	\addtolength{\textwidth}{.95in}

	\addtolength{\topmargin}{-.5in}
	\addtolength{\textheight}{.5in}

\captionsetup[subfigure]{labelfont=rm}
\DeclareGraphicsRule{.tif}{png}{.png}{`convert #1 `dirname #1`/`basename #1 .tif`.png}

\newcommand{\nc}{\newcommand}
\nc{\dmo}{\DeclareMathOperator}
\nc{\nt}{\newtheorem}

\newtheorem{theorem}{Theorem}[section]

\newtheorem{proposition}[theorem]{Proposition}
\newtheorem{corollary}[theorem]{Corollary}
\theoremstyle{definition}

\newtheorem{claim}[theorem]{Claim}

\theoremstyle{remark}

\nc{\cut}{\!\ssearrow\!}

\dmo{\Diff}{Diff}
\dmo{\Mod}{Mod}
\dmo{\SMod}{SMod}
\dmo{\I}{\mathcal{I}}
\dmo{\SO}{SO}
\dmo{\Orth}{O}
\dmo{\Sp}{Sp}
\dmo{\SL}{SL}
\dmo{\GL}{GL}
\dmo{\im}{im}
\dmo{\Emb}{Emb}
\dmo{\PSp}{PSp}
\dmo{\PSL}{PSL}
\dmo{\PMod}{PMod}
\dmo{\Homeo}{Homeo}
\dmo{\trans}{Trans}
\dmo{\Twist}{Twist}
\dmo{\Aut}{Aut}
\dmo{\Nil}{Nil}
\dmo{\Sol}{Sol}
\dmo{\Isom}{Isom}
\dmo{\Out}{Out}
\dmo{\OUT}{\mathcal{OUT}}
\dmo{\AUT}{\mathcal{AUT}}
\dmo{\Inn}{Inn}
\dmo{\ab}{ab}
\dmo{\orb}{orb}
\dmo{\Hom}{Hom}
\dmo{\Push}{Push}

\nc{\Z}{\mathbb Z}
\nc{\N}{\mathcal N}
\nc{\R}{\mathbb R}
\nc{\F}{\mathcal F}
\nc{\C}{\mathbb{C}}

\nc{\ga}{\gamma}
\nc{\de}{\delta}
\nc{\ep}{\epsilon}

\nc{\flm}{\lambda_{2}}

\nc{\normalclosure}[1]{\ensuremath{\left \langle \left \langle #1 \right \rangle \right \rangle}}

\nc{\margin}[1]{\marginpar{\scriptsize #1}}

\nc{\p}[1]{\bigskip\noindent\textbf{#1.}}
\nc{\lei}[1]{{\color{red} \sf  L: [#1]}}
\nc{\bena}[1]{{\color{blue} \sf  B: [#1]}}

\title{Circle action of the punctured mapping class group and cross homomorphism}
\author{Lei Chen}

\address{Lei Chen:  Morningside Center of Mathematics, Chinese Academy of Sciences,  \newline Academy of Mathematics and Systems Science, Chinese Academy of Sciences,  Beijing, 100190, China.}
\email{chenlei@amss.ac.cn}

\date{\today}

\begin{document}

\maketitle

\vspace*{-4ex}

\begin{abstract}
In the following short note, we give a new geometric interpretation of the generator of the infinite cyclic group $H^1(\text{Mod}(S_{g,1});H^1(S_g;\mathbb{Z}))$ (this computation is proved by Morita). There are several constructions of this class given by Earle, Morita, Trapp and Furuta. The construction we give here uses the action of $\text{Mod}(S_{g,1})$ on the circle and its rotation numbers. We also show that our construction is the same as the construction by Furuta and Trapp using winding numbers.
\end{abstract}

\section{The construction and result}
Let $S_g$ be a genus $g$ surface and let $p$ be a point on $S_g$. When $g>1$, the universal cover of $S_g$ is the hyperbolic plane $\mathbb{H}^2$. Pick a lift $\tilde{p}$ of $p$ in $\mathbb{H}^2$, any diffeomorphism $\phi$ of $S_g$ fixing $p$ can be lifted to a diffeomorphism $\tilde{\phi}$ of $\mathbb{H}^2$ fixing $\tilde{p}$. Furthermore, the lift $\tilde{\phi}$ extends to the boundary $\partial\mathbb{H}^2 \cong S^1$. The boundary action $\partial(\tilde{\phi})$ only depends on the isotopy type of $\phi$. The above construction describes the following Gromov boundary action
\[
G:\Mod(S_{g,1})\to \Homeo^+(S^1).
\]
For a proof of the above description, see \cite[Chapter 5.5]{FM}. Note that Mann--Wolff \cite{MannW} proved that any action of $\Mod(S_{g,1})$ on the circle is either trivial or semi-conjugate to $G$.

Let $\widetilde{\Homeo^+(S^1)}$  be the orientation-preserving homeomorphism of $\mathbb{R}$ that commutes with the translation by one map on $\R$. Then we have the following short exact sequence
\begin{equation}\label{Zextention}
1\to \Z\to \widetilde{\Homeo^+(S^1)}\xrightarrow{L}\Homeo^+(S^1)\to 1.
\end{equation}
Let $\trans:\widetilde{\Homeo^+(S^1)}\to \mathbb{R}$ be the translation number, which is defined as the following limit
\[
\trans(f)=\lim_{n\to \infty}\frac{f^n(0)}{n}.
\]

Let $P:\pi_1(S_g)\to \Mod(S_{g,1})$ be the point-pushing homomorphism.  Let $e: F_{2g}\to \pi_1(S_g)$ be the natural homomorphism where $F_{2g}$ is generated by $a_1,b_1,...,a_g,b_g$ and the only relation of $\pi_1(S_g)$ is given by $c:= [a_1,b_1]...[a_g,b_g]=1$. Even though the homomorphism $G \circ P$ does not have a lift under the map $L$ to $\widetilde{\Homeo^+(S^1)}$ in the exact sequence \eqref{Zextention}, the free group $G\circ P\circ e$ can be lifted to $\widetilde{\Homeo^+(S^1)}$. Let 
\[
\tilde{E}:F_{2g}\to \widetilde{\Homeo^+(S^1)}
\]
be a lift of $G\circ P\circ e$ (which is not unique).

For a group $H$, an outer-automorphism of $H$ is an automorphism of $H$ up to conjugation by an element in $H$. By Dehn--Nielsen--Baer Theorem of $\Mod(S_{g,1})$ (see, e.g., \cite[Chapter 8]{FM}), we know that $\Mod(S_{g,1})\cong \Out^*(F_{2g})$, where $\Out^*(F_{2g})$ denotes the outer-automorphism group of $F_{2g}$ consisting of elements $f:F_{2g}\to F_{2g}$ such that $f(c)$ is conjugate to $c$. By composing with a conjugation of $F_{2g}$, any element in $\Out^*(F_{2g})$ has a representative $f:F_{2g}\to F_{2g}$ such that $f(c)=c$ (such $f$ is still not unique but differed by an element in the centralizer of $c$, which is a power of $c$).

We now define a map
\[
R:\Mod(S_{g,1})\to \text{Map}(F_{2g},\mathbb{Z}).
\]
Let $\phi\in \Mod(S_{g,1})$ and $f:F_{2g}\to F_{2g}$ be a representative of $\phi$ such that $f(c)=c$. Then the definition of $R$ is given by the following formula 
\[
R(\phi)(\gamma)=\trans\circ \tilde{E}(f(\gamma))-\trans\circ \tilde{E}(\gamma)
\]
For a group $H$ and an $H$-module $M$, we call a map $\rho:H\to M$ a cross homomorphism if
\[
\rho(\gamma_1\gamma_2)=\rho(\gamma_1)+ \gamma_1\rho(\gamma_2).
\]
The main result of this paper is the following.
\begin{theorem}\label{main}
The map $R$ is well-defined, has image in $\text{Hom}(F_{2g},\mathbb{Z})\cong H^1(\pi_1(S_g);\Z)$ and $R$ is a cross homomorphism. Furthermore, as an element in the cohomology class, $[R]$ is a generator of $H^1(\Mod(S_{g,1}); H_1(\pi_1(S_g);\Z))$, which is equal to the cross homomorphism defined by Morita \cite{Morita}.
\end{theorem}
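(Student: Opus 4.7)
The plan is to handle the theorem's four assertions in order, with the main effort concentrated in the final cohomological identification via the Birman exact sequence.

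\emph{Well-definedness, integrality, and image in $\Hom(\pi_1(S_g),\Z)$.} The representative $f$ with $f(c)=c$ is unique up to the replacement $f\mapsto c^k f(\cdot) c^{-k}$ (because the centralizer of $c$ in $F_{2g}$ is $\langle c\rangle$), which conjugates $\tilde G(f(\gamma))$ by $\tilde G(c)^k$; since $\trans$ is conjugacy-invariant, $R(\phi)(\gamma)$ does not depend on this choice. For integrality, the identity $\phi P(\alpha)\phi^{-1}=P(\phi_*\alpha)$ in $\Mod(S_{g,1})$ pushes down under $G$ to
\[
(G\circ P\circ e)(f(\gamma)) \;=\; G(\phi)\cdot(G\circ P\circ e)(\gamma)\cdot G(\phi)^{-1};
\]
choosing any lift $\tilde\phi\in\widetilde{\Homeo^+(S^1)}$ of $G(\phi)$, the elements $\tilde G(f(\gamma))$ and $\tilde\phi\,\tilde G(\gamma)\,\tilde\phi^{-1}$ therefore differ by a central translation $\tau_{n(\phi,\gamma)}$ with $n(\phi,\gamma)\in\Z$, and applying $\trans$ (using $\trans(\tau_n h)=\trans(h)+n$ and conjugacy-invariance) gives $R(\phi)(\gamma)=n(\phi,\gamma)\in\Z$. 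The identity $\tilde G(f(\gamma))=\tilde\phi\,\tilde G(\gamma)\,\tilde\phi^{-1}\tau_{R(\phi)(\gamma)}$, together with multiplicativity of $\tilde G$ and centrality of the $\tau_n$, immediately yields $R(\phi)(\gamma_1\gamma_2)=R(\phi)(\gamma_1)+R(\phi)(\gamma_2)$, so $R(\phi)\in\Hom(F_{2g},\Z)$; since $f(c)=c$ forces $R(\phi)(c)=0$, $R(\phi)$ descends to $\Hom(\pi_1(S_g),\Z)\cong H^1(\pi_1(S_g);\Z)$.

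\emph{Cross-homomorphism.} Taking $f_1,f_2$ representing $\phi_1,\phi_2$ with $f_i(c)=c$, the composition $f_1\circ f_2$ represents $\phi_1\phi_2$ and a telescope yields
\[
R(\phi_1\phi_2)(\gamma) \;=\; R(\phi_1)(f_2(\gamma))+R(\phi_2)(\gamma),
\]
which under the natural $\Mod$-action on $H^1(\pi_1(S_g);\Z)$ by precomposition with the induced map on $\pi_1$ is the cocycle identity $R(\phi_1\phi_2)=R(\phi_1)+\phi_1\cdot R(\phi_2)$ (after fixing composition conventions).

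\emph{Generator and identification with Morita's class.} By Morita's computation, $H^1(\Mod(S_{g,1});H^1(S_g;\Z))\cong\Z$, so it suffices to show $[R]$ is a generator; equality with Morita's cross-ratio class then follows by matching on a single nontrivial element. I would apply the Lyndon--Hochschild--Serre five-term sequence to the Birman exact sequence
\[
1\to\pi_1(S_g)\xrightarrow{P}\Mod(S_{g,1})\to\Mod(S_g)\to 1,
\]
together with Morita's vanishing $H^1(\Mod(S_g);H^1(S_g;\Z))=0$, to embed $H^1(\Mod(S_{g,1});H^1(S_g;\Z))$ into the $\Mod(S_g)$-invariants of $H^1(\pi_1(S_g);H^1(S_g;\Z))\cong H^1(S_g)\otimes H^1(S_g)$, a rank-one subgroup spanned by the symplectic form. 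So everything reduces to computing $R\circ P$ and showing $R(P(\alpha))(\gamma)=\hat\imath(\alpha,e(\gamma))$ (up to a universal sign), by picking an explicit representative $f_\alpha\in\Aut(F_{2g})$ of $P(\alpha)$ (for instance from the product-of-Dehn-twists description of point-pushing) and using that $\tilde G(\alpha)$ is the hyperbolic translation along the axis of $\tilde\alpha\subset\mathbb{H}^2$. Since Morita's cross ratio restricts to the same intersection pairing, this simultaneously proves $[R]$ is a generator and identifies it with Morita's class. The main obstacle is precisely this intersection-number computation: unfolding $\tilde G(f_\alpha(\gamma))=\tilde\phi\,\tilde G(\gamma)\,\tilde\phi^{-1}\tau_{R(P(\alpha))(\gamma)}$ into a count of geodesic crossings in $\mathbb{H}^2$ requires relating translation numbers on $\partial\mathbb{H}^2$ to winding numbers of lifts, which is exactly where the abstract indicates that only half of the Furuta--Trapp comparison has been completed.
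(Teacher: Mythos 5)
Your handling of well-definedness, integrality, and the image landing in $\Hom(F_{2g},\Z)$ is correct and in fact takes a cleaner route than the paper. The paper derives additivity of $R(\phi)$ by invoking Matsumoto's identification of the Euler cocycle $\tau(f,g)=\trans(\tilde f\tilde g)-\trans(\tilde f)-\trans(\tilde g)$ with a $\Mod(S_{g,1})$-equivariant geometric cocycle; you instead observe that $G\circ P\circ e\circ f$ and the conjugate of $G\circ P\circ e$ by $G(\phi)$ coincide in $\Homeo^+(S^1)$, so their lifts differ by a central translation $\tau_{n(\phi,\gamma)}$, and $\gamma\mapsto n(\phi,\gamma)$ is then automatically a homomorphism to $\Z$. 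This is more elementary, yields integrality for free (a point the paper leaves implicit), and your telescoping for the cross-homomorphism identity matches Step 2 of the paper. (One small caveat: the representative $f$ with $f(c)=c$ descends to $\phi_*$ on $\pi_1(S_g)$ only up to an inner automorphism; since that extra conjugation is by an element in the image of $G\circ P$, it lifts into the image of $\tilde G$ and does not disturb the argument.)

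The gap is in the final step, and it sits exactly where the content of the theorem lies. Your predicted value $R(P(\alpha))(\gamma)=\pm\hat\imath(\alpha,e(\gamma))$ is wrong. Carrying out the computation with an explicit representative of $P(a_1)$ fixing $c$ (the paper uses $b_1\mapsto a_1^{-1}cb_1a_1$, etc.), each occurrence of $c$ contributes $\trans(\tilde G(c))=2-2g$, the Euler number of the Fuchsian representation, so $R(P(a_1))(b_1)=2-2g$ and in general $R|_P=(2-2g)\,\hat\imath$, not $\pm\hat\imath$. Consequently, under your five-term-sequence embedding of $H^1(\Mod(S_{g,1});H^1(S_g;\Z))$ into the rank-one group of invariants $\Z\cdot D$, the class $[R]$ maps to $(2-2g)D$, and this alone does not certify that $[R]$ is a generator: you also need Morita's computation that the image of this restriction map is exactly $(2g-2)\Z$, equivalently that Morita's generator itself restricts to $\pm(2g-2)\hat\imath$ on the point-pushing subgroup. (If your computation really produced $\pm D$, it would contradict that fact rather than prove generation.) The paper supplies precisely this missing comparison by citing Morita's Section 4 after the explicit calculation. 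Finally, your closing remark misplaces the difficulty: evaluating $R\circ P$ is a short algebraic manipulation using $\trans(\tilde G(c))=2-2g$ and conjugacy-invariance of $\trans$, requiring no analysis of geodesic crossings; the ``half-proved'' comparison mentioned in the abstract concerns the separate winding-number Problem, not Theorem \ref{main}.
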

Notice that Earle \cite{Earle}, Morita \cite{Morita}, Trapp \cite{Trapp} and Furuta \cite{Furuta} (described by Morita) gave various constructions of the generator of the group \[
H^1(\Mod(S_{g,1}); H_1(\pi_1(S_g);\Z))\cong \Z.\] 
Trapp and Furuta used the unit tangent bundle and winding numbers. Earle used Moduli spaces and Morita gave a combinatorial construction. All of the above definition seem very different. It is an interesting question to consider how those construction interact with each other. Kuno \cite{Kuno} gave an understanding of the difference between Earle construction and Morita construction. This cross homomorphism is also related to Johnson homomorphism as discussed in Morita \cite{Morita2}.

Our second result is to relate our construction with that of  Trapp \cite{Trapp} and Furuta \cite{Furuta} (described by Morita). Let $D\subset S_g$ be an open disk. Let $X$ be a nowhere zero vector field on $S_g-D$. Let $\mathcal C$ be the set of nonseparating simple closed curves on $S_g-D$. Let $\mathcal P$ be the set of all closed curves on $S_g-D$.

Then there is a winding number map 
\[
\omega_X:\mathcal C\subset  \mathcal P \to\Z 
\]
such that $\omega_X(c)$ is the number of times the tangents to $c$ rotate with respect to the framing of $c$ by $X$. For example, if $z$ is a trivial loop, then $\omega_X(z)=1$.

Our next result is the following comparison between $\omega_X$ and $\trans\circ \tilde{E}$.
\begin{theorem}\label{main2}
The map $\trans\circ \tilde{E}$ is conjugation-invariant, which gives a map
\[
\trans\circ \tilde{E}:\mathcal C\subset  \mathcal P \to\Z.
\]
Furthermore, there exists a nowhere zero vector field $X$ on $S_g-D$ such that
\[
\omega_X|_{ \mathcal C}=\trans\circ \tilde{E}|_{ \mathcal C}.
\]
\end{theorem}
We remark that over $ \mathcal L$, the map $\omega_X$ and $\trans\circ \tilde{E}$ are not the same since for a trivial loop $z$, as we have $\trans\circ \tilde{E}(z)=0$. It is different from $\omega_X(z)=1$.

\begin{corollary}\label{main3}
The map $R$ is equal to the cross homomorphism defined by Trapp \cite{Trapp}.
\end{corollary}

\p{Acknowledgement}
We thank Bena Tshishiku for introducing me the paper of Morita. We also thank Nick Salter and Dan Margalit for helpful comments on an earlier version of the paper.

\section{Background: results of Matsumoto and Humphries--Johnson}
We will introduce two old results of Matsumoto and Humphries--Johnson.
\subsection{The result of Matsumoto}
For the group $\Homeo^+(S^1)$, we have the following canonical Euler cocycle \[
\tau: \Homeo^+(S^1)\times \Homeo^+(S^1)\to \Z\] given by
\[
\tau(f,g):=\trans(\tilde{f}\tilde{g})-\trans(\tilde{f})-\trans(\tilde{g}),\]
where $\tilde{f}$ and $\tilde{g}$ are lifts of $f,g$ in $\widetilde{\Homeo}^+(S^1)$. The map $\tau$ does not depend on the lifts and the value of $\tau$ is in $\{-1,0,1\}$.

Matsumoto \cite[Theorem 3.3]{Matsumoto} proved the following.
\begin{theorem}[Matsumoto]\label{Matsumoto}
The cocycle $\tau$ is the same as a geometric cocyle \[\theta: \pi_1(S_g)\times \pi_1(S_g)\to \mathbb{Z}\]
when restricted to the Fuchsian representation $\pi_1(S_g)\to \Homeo^+(S^1)$.
The cocycle $\theta$ is given by the following rule: Suppose that the subgroup $F$ generated by $\alpha,\beta$ is a free group on two generator, consider the covering $U\to S_g$ associated with $F$, $U$ is either a punctured torus or a pair of pants. If $U$ is a pair of pants and that $\alpha$ and $\beta$ are represented by two boundary curves which match (resp. oppose) the orientation of $U$, we define $\theta(\alpha,\beta)=-1 (resp. -1)$. In all the other cases, define $\theta(\alpha,\beta)=0$. 
\end{theorem}

Let $M=S_g-D$. Since $G\circ P$ comes from a Fuchsian representation, and $\tilde E$ is a lift of $G\circ P$ to $\widetilde{\Homeo}^+(S^1)$, we have the following consequence.
\begin{proposition}\label{Matsumoto1}
Let $a,b,c$ be three simple closed curves on $M$ bounding a pair of pants $Q$ in $M$ such that $Q$ is on the left of each of $a,b,c$. We have that
\[
\trans\circ \tilde E(a)+\trans\circ \tilde E(b)+\trans\circ \tilde E(c) = -1
\]
\end{proposition}
\begin{proof}
Since $c$ is equal to $ab$ with a change of orientation, we have
\[
\trans\circ \tilde E(c)+\trans\circ \tilde E(a)+\trans\circ \tilde E(b) = -1,
\]
which does not depend on the lift $\tilde E$ by Theorem \ref{Matsumoto}.
\end{proof}
We have another corollary. 
\begin{proposition}\label{Matsumoto2}
\begin{enumerate}
\item
Let $a,b\in \pi_1(S_g)$ be two element having nonzero geometric intersection number. Then we have
\[
\trans\circ \tilde E(a)+\trans\circ \tilde E(b)=\trans\circ \tilde E(ab)
\]
\item Let $a,b\in \pi_1(S_g)$ be two element that are homotopic. Then we know 
\[
\trans\circ \tilde E(a)+\trans\circ \tilde E(b)=\trans\circ \tilde E(ab)
\]
\end{enumerate}
\end{proposition}
\begin{proof}
This comes from the fact that the liftings of $a,b$ will have intersection number, which implies that $\theta(a,b) = 0$. The other case follows similarly.\end{proof}

\subsection{The result of Humphries--Johnson}
Humphris--Johnson \cite{HJ} classify the set of \emph{twist linear functions} which is defined as the following. Let $\mathcal C$ be the set of nonseparating simple closed curves. For two curves $c,d$ on a surface $M$, let $\langle d,c\rangle$ denote the algebraic intersection number. 

Then we can define a twist linear function, with values in an abelian group $V$, to be a function
\[
\phi: \mathcal C\to V
\]
satisfying
\[
\phi(T_d(c))=\phi(c)+\langle d,c\rangle\phi(d)
\]
Let $UM$ be the unit tangent bundle of $M$. For any class $\alpha\in H^1(UM;\Z)$, we can define
\[
\omega_\alpha: \mathcal C\to \Z
\]
by lifting a smooth representative and the evaluate on $\alpha$. Humphries--Johnson \cite[Theorem 2.1]{HJ} showed that any twist linear map is given by $\omega_\alpha$. Furthermore, if $\omega_\alpha(z)=1$ for the trivial loop $z$, we have that $\omega_\alpha=\omega_X$ for a nowhere zero vector field on $M$.

\section{The proof of Theorem \ref{main}}
\p{\boldmath Step 1: $R$ is well-defined, and has image in $\text{Hom}(F_{2g},\mathbb{Z})\cong H^1(\pi_1(S_g);\Z)$}

Let $\phi\in \Mod(S_{g,1})$. To prove that $R(\phi)$ is well-defined, we need to prove that it does not depend on the choice of $f:F_{2g}\to F_{2g}$ representing $\phi\in \Out^*(F_{2g})$ such that $f(c)=c$. This comes from the fact that a different choice is given by composing $f$ with a conjugation by a power of $c$ where $\tilde{E}(c)$ is a translation by integers. Conjugation by a translation of integers has no effect on the translation number since translation by integers commute with $\widetilde{\Homeo^+(S^1)}$.

We now show that $R$ has image in $\text{Hom}(F_{2g},\mathbb{Z})\cong H^1(\pi_1(S_g);\Z)$.
\begin{claim}Let $\phi\in \Mod(S_{g,1})$ and $\alpha,\beta\in F_{2g}$, then
\[
R(\phi)(\alpha \beta)=R(\phi)(\alpha) + R(\phi)(\beta)\]
\end{claim}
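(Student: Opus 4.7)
The plan is to sidestep the non-additivity of $\trans$ (which is only a quasimorphism on $\widetilde{\Homeo^+(S^1)}$) by reinterpreting $R(\phi)(\gamma)$ as the integer obstruction to a clean conjugation identity in the central extension \eqref{Zextention}. Once $R(\phi)(\gamma)$ is written in that form, additivity in $\gamma$ becomes automatic, because group conjugation distributes over products.

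Concretely, I would first pick any lift $\widetilde{G(\phi)} \in \widetilde{\Homeo^+(S^1)}$ of $G(\phi)$; the choice is irrelevant since two lifts differ by a central integer translation. For each $\gamma \in F_{2g}$, the two elements
\[
\widetilde{G(\phi)}\, \tilde{G}(\gamma)\, \widetilde{G(\phi)}^{-1} \quad \text{and} \quad \tilde{G}(f(\gamma))
\]
of $\widetilde{\Homeo^+(S^1)}$ have the same image under $q$, namely $G(P(e(f(\gamma))))$; this uses the Birman-sequence identity $\phi P(\mu)\phi^{-1}=P(\phi\cdot\mu)$ in $\Mod(S_{g,1})$ together with $e(f(\gamma))=\phi\cdot e(\gamma)$. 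Since $\ker q=\Z$, there is a unique integer $n_\phi(\gamma)$ with
\[
\widetilde{G(\phi)}\, \tilde{G}(\gamma)\, \widetilde{G(\phi)}^{-1} = \tilde{G}(f(\gamma))\cdot T_{n_\phi(\gamma)},
\]
where $T_k$ denotes translation by $k$. Applying $\trans$ to this identity, using its conjugation-invariance on the left and the central, integer-shifting property of $T_k$ on the right, one reads off $R(\phi)(\gamma) = -n_\phi(\gamma)$.

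It remains to show that $n_\phi$ is a homomorphism $F_{2g}\to\Z$. For this I would evaluate the defining identity at $\gamma=\alpha\beta$ in two different ways. The conjugate of the product factors as the product of the conjugates; after moving all $T_k$'s to the right using centrality and invoking the fact that $\tilde{G}$ and $f$ are homomorphisms, this produces $\tilde{G}(f(\alpha\beta))\cdot T_{n_\phi(\alpha)+n_\phi(\beta)}$. The right-hand side of the defining identity with $\gamma=\alpha\beta$ is $\tilde{G}(f(\alpha\beta))\cdot T_{n_\phi(\alpha\beta)}$. Cancelling forces $n_\phi(\alpha\beta)=n_\phi(\alpha)+n_\phi(\beta)$, which by the previous paragraph gives the claim.

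The main potential obstacle is the input identity in the second paragraph: one has to be sure that the downstairs conjugation $G(\phi)G(P(e(\gamma)))G(\phi)^{-1} = G(P(e(f(\gamma))))$ really holds. This reduces to the standard statement from the Birman exact sequence that the outer action of $\Mod(S_{g,1})$ on $\pi_1(S_g)$ is implemented by honest conjugation in $\Mod(S_{g,1})$ against point-pushes, so no new ingredient is required beyond the setup already in the paper. Everything else is formal manipulation inside the central extension \eqref{Zextention}.
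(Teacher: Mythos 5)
Your proof is correct, but it takes a genuinely different route from the paper. The paper computes the defect of $\trans\circ\tilde G$ via the Euler cocycle $\tau(f,g)=\trans(\tilde f\tilde g)-\trans(\tilde f)-\trans(\tilde g)$, invokes Matsumoto's theorem to identify $\tau$ restricted to the Fuchsian representation with a geometric cocycle $\theta$ defined in terms of the covering space associated to $\langle\alpha,\beta\rangle$, and uses the $\Mod(S_{g,1})$-equivariance of $\theta$ to get $\tau(f(\alpha),f(\beta))=\tau(\alpha,\beta)$, from which the additivity of $R(\phi)$ falls out by a telescoping computation. You instead lift the Birman-sequence conjugation identity into the central extension \eqref{Zextention}, identify $R(\phi)(\gamma)$ with (minus) the integer discrepancy $n_\phi(\gamma)$ between the two lifts, and get additivity of $n_\phi$ by pure group theory; the only analytic inputs are the conjugation-invariance of $\trans$ and $\trans(gT_k)=\trans(g)+k$, both standard. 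Your argument is more elementary and self-contained --- it avoids Matsumoto entirely (in effect, the invariance $\tau(f(\alpha),f(\beta))=\tau(\alpha,\beta)$ that the paper extracts from Matsumoto is just conjugation-invariance of $\tau$, which your setup makes explicit) --- and it has the added benefit of making the integrality of $R(\phi)(\gamma)$ manifest, which the paper's definition via a difference of real translation numbers leaves implicit. The paper's route, on the other hand, sets up the geometric cocycle $\theta$ that is reused later (e.g.\ in the proof of Theorem \ref{main3}). One point to make airtight in your write-up: the identity $e(f(\gamma))=\phi_*(e(\gamma))$ requires that the normalized representative $f$ (with $f(c)=c$) descend to the honest basepointed action $\phi_*$ on $\pi_1(S_g,p)$ and not merely to its outer class; this is true with the standard conventions, and even if the two differed by an inner automorphism of $\pi_1(S_g)$ your argument would survive by absorbing that inner automorphism into the conjugating element $\widetilde{G(\phi)}$, but you should say so explicitly.
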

\begin{proof}
Let $f:F_{2g}\to F_{2g}$ be a representative of $\phi\in \Out^*(F_{2g})$ such that $f(c)=c$.

The formula $\theta(\alpha,\beta)$ is equivariant under the action of the mapping class group $\Mod(S_{g,1})$. Thus from the identification of $\theta$ and $\tau$ by Theorem \ref{Matsumoto}, we obtain that \[
\tau(\phi(\alpha),\phi(\beta))=\tau(\alpha,\beta).\]
This implies that
\begin{equation}
\begin{split}
&\trans(\tilde{E}(\alpha)\tilde{E}(\beta))-\trans(\tilde{E}(\alpha))-\trans(\tilde{E}(\beta)) \\
 =&  \trans(\tilde{E}(f(\alpha))\tilde{E}(f(\beta)))-\trans(\tilde{E}(f(\alpha)))-\trans(\tilde{E}(f(\beta))) 
\end{split}
\end{equation}

\begin{equation}
\begin{split}
&\trans(\tilde{E}(f(\alpha))\tilde{E}(f(\beta))) - \trans(\tilde{E}(\alpha)\tilde{E}(\beta)) \\
 =& \trans(\tilde{E}(f(\alpha))) -\trans(\tilde{E}(\alpha))+ \trans(\tilde{E}(f(\beta)))  -  \trans(\tilde{E}(\beta))
\end{split}
\end{equation}
Then 
\[
R(\phi)(\alpha \beta)=R(\phi)(\alpha)+R(\phi)(\beta). \qedhere\]
\end{proof}

\p{\boldmath Step 2: $R$ is a cross homomorphism}
We need to show that 
\[
R(\phi\eta) = \eta^{-1}R(\phi) + R(\eta)
\]
Let $f,h:F_{2g}\to F_{2g}$ be representative of $\phi,\eta$ such that $f(c)=h(c)=c$. Then $h\circ f$ is a representative of $\phi\eta$ such that $h\circ f(c)=c$. Then
\begin{equation}
\begin{split}
 &R(\phi\eta)(\alpha) \\
=& \trans(h\circ f(\alpha))-\trans(\alpha)\\
= & \trans(h(f(\alpha))) - \trans(f(\alpha)) + \trans(f(\alpha))-\trans(\alpha)\\
=& R(\eta) (f(\alpha)) + R(\phi)(\alpha)\\
= & \phi^*R(\eta)(\alpha) + R(\phi)(\alpha)
\end{split}
\end{equation}
This proves what we need.

\p{\boldmath Step 3: $R$ is the generator of $\Mod(S_{g,1})\to H^1(S_g;\Z)$}
For this part, we only need to check the restriction on the point-pushing subgroup $P:\pi_1(S_g)\to \Mod(S_{g,1})$. This gives a homomorphism
\[
R|_{P}:\pi_1(S_g)\to H^1(S_g;\Z),
\]
since the point-pushing subgroup acts trivially on $H^1(S_g;\Z)$. The homomorphism $R|_{P}$ factors through the abelianization $H_1(S_g;\mathbb{Z})$ of $\pi_1(S_g)$. Let $i:\pi_1(S_g)\times \pi_1(S_g)\to \Z$ be the algebraic intersection number, we have the following.
\begin{claim}We have the following formula
\[
R|_P(a)(b) = (2-2g)i(a,b).\]
\end{claim}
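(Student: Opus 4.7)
The plan is to split the computation of $R(P(a))(b)$ into two independent factors: an Euler-number factor coming from $\tilde{G}(c)$ and an intersection-number factor coming from a correction word in $\langle\langle c\rangle\rangle$.

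Structurally, since $P(a)$ descends to conjugation by $a$ on $\pi_1(S_g)=F_{2g}/\langle\langle c\rangle\rangle$, any representative $f:F_{2g}\to F_{2g}$ of $P(a)\in\Out^*(F_{2g})$ with $f(c)=c$ takes the form
\[
f(b)=a b a^{-1}\cdot w_{a,b},\qquad w_{a,b}\in\langle\langle c\rangle\rangle,
\]
for a chosen lift of $a$ to $F_{2g}$. Since $\trans$ is conjugation-invariant in $\widetilde{\Homeo^+(S^1)}$, we have $\trans(\tilde{G}(aba^{-1}))=\trans(\tilde{G}(b))$. Moreover, $\tilde{G}(c)$ lifts $G(c)=\mathrm{id}$, hence it is an integer translation and therefore central in $\widetilde{\Homeo^+(S^1)}$; consequently $\tilde{G}(uc^{\pm1}u^{-1})=\tilde{G}(c)^{\pm 1}$ for every $u\in F_{2g}$, and $\trans\circ\tilde{G}$ restricted to $\langle\langle c\rangle\rangle$ has the form $s(\cdot)\cdot\trans(\tilde{G}(c))$, where $s:\langle\langle c\rangle\rangle\to\Z$ is the homomorphism sending each conjugate of $c$ to $1$. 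Assembling,
\[
R(P(a))(b)=s(w_{a,b})\cdot\trans(\tilde{G}(c)).
\]

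I would then identify the two factors separately. First, $\trans(\tilde{G}(c))=2-2g$ is the Euler number of the Fuchsian boundary representation $G$: by construction $\tilde{G}(c)$ is the product of commutators of chosen lifts of the generators of $\pi_1(S_g)$, and by Milnor--Wood this translation number equals $\chi(S_g)=2-2g$ with the standard orientation convention for the hyperbolic structure. Second, $s(w_{a,b})=i(a,b)$ has a geometric meaning: $s(w_{a,b})$ is the algebraic intersection with $p$ of any disk in $S_g$ bounding $w_{a,b}$, and realizing $P(a)$ by a diffeomorphism that drags $p$ once around a representative of $a$ shows that each signed transverse crossing of $b$ with $a$ contributes exactly one signed conjugate of $c$ to $w_{a,b}$.

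The main obstacle is the identification $s(w_{a,b})=i(a,b)$. To handle it, I would use that $R|_P$ is already known to be a homomorphism (since the point-pushing subgroup acts trivially on $H^1(S_g;\Z)$) factoring through $H_1(S_g;\Z)$, so the formula need only be verified on a standard symplectic basis $\{a_1,b_1,\dots,a_g,b_g\}$. For a disjoint basis pair, I would realize $P(a)$ by a diffeomorphism supported in a neighborhood of $a$ disjoint from $b$, so that $f(b)=b$ and $s(w_{a,b})=0$. For a single-crossing pair such as $(a_i,b_i)$, a local isotopy argument in a neighborhood of the intersection point --- pushing $p$ across $b_i$ inserts exactly one signed conjugate of $c$ into $f(b_i)$ --- yields $s(w_{a_i,b_i})=\pm1$, with sign determined by orientations. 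Combining this with $\trans(\tilde{G}(c))=2-2g$ gives the claimed formula $R|_P(a)(b)=(2-2g)i(a,b)$.
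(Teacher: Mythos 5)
Your argument is correct and rests on the same mechanism as the paper's: a representative $f$ of $P(a)$ with $f(c)=c$ differs from a conjugation only by insertions of conjugates of $c^{\pm 1}$, the lift $\tilde{G}(c)$ is a central integer translation, and $\trans$ is conjugation-invariant, so $R(P(a))(b)$ equals the signed count of inserted $c$'s times $\trans(\tilde{G}(c))$. Where you differ is in how the two factors are pinned down: the paper writes an explicit word formula for the representative of $P(a_1)$ on the standard generators (namely $b_1\mapsto a_1^{-1}cb_1a_1$, $a_j\mapsto a_1^{-1}ca_jc^{-1}a_1$, etc., from which the signed count of $c$'s --- and hence the match with $i(a_1,\cdot)$ --- is read off directly) and then extends to arbitrary $a$ via $\Mod(S_{g,1})$-equivariance combined with the cross-homomorphism identity, whereas you identify $s(w_{a,b})=i(a,b)$ geometrically on basis pairs and extend by bi-additivity of both sides; both routes work, and yours avoids the change-of-coordinates computation. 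Two small points to tighten: the equality $\trans(\tilde{G}(c))=2-2g$ is Milnor's computation of the Euler number of the Fuchsian representation (Milnor--Wood is only the inequality $|e|\le 2g-2$), a value the paper also uses without comment; and your basis check should explicitly cover pairs such as $(a_i,a_i)$, which follow either from conjugation-invariance of $\trans$ applied to $f(a_i)$ or by pushing the test loop to a parallel copy disjoint from the support of the point-push.
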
 
\begin{proof}
We will do this by a calculation. Let $a_1,b_1,...,a_g,b_g$ be the standard generating set of $\pi_1(S_g)$. The point-pushing $P(a_1)$ has the following representative $f:F_{2g}\to F_{2g}$ such that 
\begin{equation}
a_1\to a_1, b_1\to a_1^{-1}cb_1a_1, a_j \to a_1^{-1}ca_jc^{-1}a_1, b_j \to a_1^{-1} cb_jc^{-1}a_1.
\end{equation}
We obtain the above representative by first find some representative using the exact mapping class, then we compose with a conjugation to make sure that $f(c)=c$.

Now we can easily obtain that $R(P(a_1))(a_1) = 0$, $R(P(a_1))(b_1) = 2-2g$ and $R(P(a_1))(a_j) = 0, R(P(a_1))(b_j) = 0$ for $1<j\le g$. Thus we obtain that \[R(P(a_1))(b)=(2-2g)i(a_1,b).\] 
Let $c$ be an element in $H_1(S_g;\Z)$ represented as a non-separating simple closed curve. Then there is an element $\phi\in \Mod(S_{g,1})$ such that $\phi(a_1)=c$. Then we have that
\[
R(\phi^{-1} P(a_1) \phi) = R(P(c))
\]
By cross homomorphism of $R$, we know that
\[
R(\phi^{-1} P(a_1) \phi) = R(\phi^{-1}P(a_1) ) + (\phi^{-1} P(a_1) )^*R(\phi) = R(\phi^{-1}) + (\phi^*)^{-1}(P(a_1))+(\phi^*)^{-1}(R(\phi))
\]
Since $R(\phi^{-1} \phi) = R(id) = 0$, we know that $R(\phi^{-1}) +(\phi^*)^{-1}(R(\phi^{-1})) = 0$.

Thus we obtain that $R(P(c))=(\phi^*)^{-1}(R(P(a_1)))$, then we have
\[
\begin{split}
&R(P(c))(x) = (\phi^*)^{-1}(R(P(a_1)))(x) = R(P(a_1))((\phi_*)^{-1}(x))\\
=& i(a_1,(\phi_*)^{-1}(x))=i(\phi_*(a_1),x) = i(c,x)\qedhere
\end{split}
\]\end{proof}
Then by Morita \cite[Section 4]{Morita}, we know that $R$ as a cohomology class in $H^1(\Mod(S_{g,1}),H^1(S_g;\Z))$ is the same as Morita's class.

\section{The proof of Theorem \ref{main2} and \ref{main3}}
The following is well-known.
\begin{proposition}
\[\trans: \widetilde{\Homeo^+(S^1)} \to \R\]
is conjugation-invariant.
\end{proposition}
\begin{proof}
This is because the translation number is the long-term behavior of an element under iterations, which implies that it is conjugation-invariant.
\end{proof}
We denote by $\tau:= \trans \circ \tilde E$. We thus know that 
\[\tau: F_{2g} \to \Z \]
only depends on the conjugation classes of $F_{2g}$, or on closed curves on $M$. Let $X$ is a nowhere zero vector field on $M$. Both $\tau$ and $\omega_X$ are functions from $\mathcal L$ to $\Z$. However the difference between $\omega_X$ and $\tau$ is that for a trivial loop $z$, we have $\omega_X(z)=1$ but $\tau(z)=0$.

We now prove that $\tau$ is a twist linear function. However, the map $\tau$ is not cross linear as defined by Humphries--Johnson \cite{HJ}. 

\begin{proposition}\label{twistlinear}
$\tau$ is a twist linear function.
\end{proposition}
\begin{proof}
Let $c$ be a curve starting from the basepoint on $\partial M$. Let $d$ be a simple closed curve on $M$ such that $i(c,d)\neq 0$. Then we know that  $i(T_d(c),c)=i(d,c)^2\neq 0$ by Farb--Margalit \cite[Proposition 3.2]{FM}. In this case, we know that $T_d(c)$ and $c$ has nonzero geometric intersection number on $S_g$. We then know
\[\tau(T_d(c)c^{-1}) = \tau(T_d(c)) + \tau(c^{-1}) \]
by Proposition \ref{Matsumoto2}.

Let $c$ be the union $c_1...c_k$ where each $c_i$ is a component of $c-d$. Then we have
\[
T_d(c) = c_1d^{e_1} c_2d^{e_2}...c_k
\]
for $e_1,...,e_{k-1}\in \pm 1$ depending on the orientation of the intersection $c\cap d$ and $\sum e_i = \langle d,c\rangle$. We thus know 
\[
T_d(c) = c_1d^{e_1} c_2...c_k = c_1d^{e_1}c_1^{-1} c_1c_2 d^{e_2} c_2^{-1}c_1^{-1} ...c_1...c_k
\]
We know that the element $d_k:=c_1...c_k d c_k^{-1}...c_1^{-1}$ is homotopic to $d$.

Then we know by Proposition \ref{Matsumoto2} that
\[
\tau(T_d(c)c^{-1}) = \langle d,c \rangle \tau(d)
\]
Combining the above two equations, we know 
\[ \tau(T_d(c)) = \tau(T_d(c)c^{-1}) -  \tau(c^{-1})=  \tau(c) + \langle d,c \rangle \tau(d) \]
This proves that $\tau$ is a twist linear function.
\end{proof}

We now start the proof of Theorem \ref{main2}.
\begin{proof}[Proof of Theorem \ref{main2}]
Humphries--Johnson \cite[Theorem 2.1]{HJ} showed that any twist linear map over $\mathcal C$ is given by $\omega_\alpha$.  Since $\tau$ is a twist linear function, there exists a class $\alpha\in H^1(US_g^1;\Z)$ such that $\omega_\alpha = \tau$. We now compute $\omega_\alpha(z)$ for a trivial loop $z$.

The trivial loop $z$ corresponds to a pair of pants $a,b,c$ such that $-z=a+b+c$ by Humphries--Johnson \cite[Lemma 2.4]{HJ}. Then by Proposition \ref{Matsumoto1}, we have
\[\omega_\alpha(z)=-(\tau(a)+\tau(b)+\tau(c)) = 1.\]
Thus by  Humphries--Johnson \cite[Theorem 2.1]{HJ}, we have that $\omega_\alpha=\omega_X$ for a vector field $X$ over $S_g^1$ (vector fields on $S_g^1$ corresponds to the twist linear function whose value is $1$ on the trivial loop).
\end{proof}

\section{Another interpretation of the crossed homomorphism $R$}
In this section, we will use Section 4 to give a more precise statement about which vector field should be the one in Theorem \ref{main2}.
\subsection{A definition of $R$ without making choices}
While the action $G$ does not lift to $\widetilde{\Homeo^+(S^1)}$, we have a unique lifting for the $\Z$-central extension $\Mod(S_g^1)$ as the following by \cite[Section 5.5.6]{FM}
\[
\tilde{G}: \Mod(S_g^1)\to \widetilde{\Homeo^+(S^1)}
\]
The subgroup of $\Mod(S_g^1)$ which is the kernel of the forgetful map $\Mod(S_g^1)\to \Mod(S_g)$ is the disk-pushing subgroup which is isomorphic to $\pi_1(US_g)$ where $US_g$ is the unit-tangent bundle of $S_g$. By composing with the $\trans$, we define the following map
\[
\mathcal T: \Mod(S_g^1)\to \widetilde{\Homeo^+(S^1)}\xrightarrow{\trans}\Z
\]
Let $US_g$ be the unit tangent bundle of the surface $S_g$. We know that $\Mod(S_g^1)$ contains $\pi_1(US_g)$ as a normal subgroup such that the quotient is $\Mod(S_g)$. Since the center element acts trivially on $\pi_1(US_g)$, we obtain an action of $\Mod(S_{g,1})$ on $\pi_1(US_g)$. More precisely, we have a result from  \cite{BenaChen} as the following.
\begin{theorem}
The two groups $Aut(\pi_1(US_g))$ and $Aut(\pi_1(S_g))$ satisfying the following short exact sequence 

\[
1\to H^1(S_g;\Z)\to \Aut(\pi_1(US_g))\to \Aut(\pi_1(S_g))\]
and it has a natural splitting $A:\Aut(\pi_1(S_g))\to \Aut(\pi_1(US_g))$. The map $A(\phi)$ is defined as the isomorphism on $\pi_1(US_g)$ of the representative $\phi:F_{2g}\to F_{2g}$ such that $\phi(c)=c$. 
\end{theorem}
Using the action $A$, we give another definition of $R$. Let $\phi\in \Mod(S_{g,1})$ and $\alpha\in \pi_1(US_g)$, we define
\[
R(\phi) (\alpha) =\mathcal T(A(\phi)\alpha) - \mathcal T(\alpha).
\]
By definition, $R$ is the same as the one defined in the introduction. 
In here we obtain a map
\[
\mathcal T|_{\pi_1(US_g)}: \pi_1(US_g)\to \Z.
\]
So far, we have not made any choices. In here our definition of $\Mod(S_{g,1})\to H$ has nothing to do with the choice of lifting $\tilde E$.

\subsection{\boldmath The lifting $\tilde E$ given by an embedding $i: S_g^1\to S_g$.}

Let $i: S_g^1\to S_g$ be an embedding. We then obtain another embedding $\tilde i:US_g^1\to US_g$. Let $X: S_g^1\to US_g^1$ be a vector field. Then the vector field induced a homomorphism
\[
X_*: \pi_1(S_g^1)\to \pi(US_g^1).
\]
Now the map \[
\tau := \mathcal T\circ \tilde i\circ X_*: \pi_1(S_g^1)\to \Z
\]
is the composition of a lifting $\tilde G\circ \tilde i\circ X_*: \pi_1(S_g^1) \cong F_{2g}\to \widetilde{\Homeo^+(S^1)}$ (a homomorphism) and $\trans$. 

For a curve $\alpha\in \pi_1(S_g^1)$ (not necessarily simple), we have another lift $L(\alpha)\in \pi_1(US_g^1)$ by lifting the tangents. However this is no longer a homomorphism since the trivial loop will lift to the center. We now prove the following.
\begin{proposition}
For any element $\alpha\in\pi_1(S_g^1)$ (not necessarily simple), we have
\[
\tau(\alpha) = \mathcal T \circ \tilde i(L(\alpha)) - \omega_X(\alpha)
\]
\end{proposition}
\begin{proof}
Let $z$ be the center of $\pi_1(US_g^1)$. By the definition of winding number, we have
\[
L(\alpha)- X_*(\alpha)=\omega_X(\alpha)z.\]
Then we have

\[
\tau(\alpha)=\mathcal T \circ \tilde i(X_*(\alpha)) = \mathcal T \circ \tilde i(L(\alpha)-\omega_X(\alpha)z) = \mathcal T \circ \tilde i(L(\alpha)) - \omega_X(\alpha)
\]
For the trivial loop $\epsilon$, we have that both $\mathcal T \circ \tilde i(L(\alpha))$ and $\omega_X(\alpha)$ is $1$, and the difference happen to be the map $\tau$. 
\end{proof}

For $\alpha$ a simple closed curve in $\pi_1(S_g^1)$, the way to find $L(\alpha)$ is through taking a geodesic representative and the lifting it to $US_g$.  

\begin{proposition}
For $\alpha\in \pi_1(S_g^1)$ a simple closed curve, we have 
\[
\mathcal T \circ \tilde i\circ L(\alpha) = 0
\]
and 
\[
\tau(\alpha) = -\omega_X(\alpha).
\]
\end{proposition}
\begin{proof}
Since both $\tau$ and $\omega_X$ are twist-linear map, we know that their difference is also a twist linear map. Then  $\mathcal T \circ \tilde i\circ L$ is a twist linear map. 

Since the action of $\Mod(S_g^1)$ on the simple closed curve in $S_g^1$ is transitive, and the translation number is a conjugation invariant, we know that $\mathcal T \circ \tilde i\circ L$ on non-separating simple closed curves are all the same. By the Dehn twist formula of twist linear functions, we know that $\mathcal T \circ \tilde i\circ L$ is zero on non-separating simple closed curves.
\end{proof}

\bibliographystyle{alpha}
\bibliography{citing}

\end{document}